\documentclass[a4paper,11pt]{article}
\usepackage[utf8]{inputenc}
\usepackage{a4wide}
\usepackage{algorithm}
\usepackage{algorithmic}
\usepackage{latexsym,amsfonts,amsmath,amssymb,mathrsfs,url,amsthm}
\usepackage{mathtools}
\usepackage{dsfont}
\usepackage{color,graphicx}
\usepackage{lipsum}
\usepackage{hyperref}

\usepackage{xcolor}
\newtheorem{theorem}{Theorem}[section]
\newtheorem{lemma}[theorem]{Lemma}

\newtheorem{remark}[theorem]{Remark}

\newtheorem{definition}[theorem]{Definition}


\newcommand{\E}{\mathbb{E}}
\newcommand{\NN}{\mathbb N}
\newcommand{\ZZ}{\mathbb{Z}}
\newcommand{\Ocal}{\mathcal{O}}
\newcommand{\Pcal}{\mathcal{P}}
\newcommand{\Scal}{\mathcal{S}}
\renewcommand{\P}{\mathbb{P}}

\newcommand{\bsa}{\boldsymbol{a}}
\newcommand{\bsc}{\boldsymbol{c}}
\newcommand{\bsk}{\boldsymbol{k}}
\newcommand{\bsx}{\boldsymbol{x}}
\newcommand{\bsy}{\boldsymbol{y}}
\newcommand{\bszero}{\boldsymbol{0}}


\title{Hidden low-discrepancy structures in random point sets}
\author{Kohei Suzuki\thanks{School of Engineering, The University of Tokyo, 7-3-1 Hongo, Bunkyo-ku, Tokyo 113-8656, Japan (\url{suzuki-kouhei-r66@g.ecc.u-tokyo.ac.jp}; \url{goda@frcer.t.u-tokyo.ac.jp})}, Takashi Goda\footnotemark[1]}
\date{\today}

\begin{document}

\maketitle
\sloppy

\begin{abstract}
We study the probabilistic existence of point configurations satisfying the $(0, m, d)$-net property in base $b$ within a randomly generated point set of size $N$ in the $d$-dimensional unit cube.
We first derive an upper bound on the number of geometric patterns for $(0, m, d)$-nets in base $b$.
By applying the elementary probability bounds together with this counting result, we then give scaling conditions on $N$ as a function of $m$ such that this probability converges to $1$ and $0$, respectively. 
\end{abstract}

\section{Introduction}

The dichotomy between randomized and deterministic algorithms has been a central and fundamental theme in numerical analysis \cite{No88}. In particular, in the context of high-dimensional integration, the standard Monte Carlo (MC) methods rely on independent random sampling, whereas quasi-Monte Carlo (QMC) methods rely on well-structured low-discrepancy sampling \cite{DKS13}. If the integrand, defined over the $d$-dimensional unit cube $[0,1)^d$, has a finite variance, the MC method achieves a probabilistic error of $\Ocal(N^{-1/2})$, independently of the dimension $d$. On the other hand, if the integrand has a finite total variation in the sense of Hardy and Krause, the Koksma--Hlawka inequality ensures that the QMC method based on low-discrepancy point sets achieves a deterministic error of $\Ocal(N^{-1}(\log N)^{d-1})$, which asymptotically decays much faster than the MC method \cite{KN74,Ni92}.

To bridge these approaches, randomized quasi-Monte Carlo (RQMC) methods have been extensively studied, including Cranley--Patterson rotation \cite{CP76} and Owen's scrambling \cite{Ow95}; we also refer to \cite{LL02} regarding RQMC methods. RQMC retains the structural equi-distribution properties of deterministic QMC point sets while introducing sufficient randomness to allow for statistical error estimation. Conceptually, RQMC proceeds from \emph{structure to randomness}, i.e., one starts with a highly structured point set and then applies a proper randomization.

In this paper, we explore a fundamentally different perspective, proceeding from \emph{randomness to structure}, and consider a Ramsey-theoretic question: \emph{Does a sufficiently large, purely random point set inevitably contain a highly structured subset?} To be more specific, we focus on $(0,m,d)$-nets in base $b$, a class of well-known QMC point sets \cite{Ni92,DP10}, and study whether a large random point set contains a $(0,m,d)$-net as a subset with high probability.

Our work is partly motivated by a recent work by Bansal and Jiang \cite{BJ25}, who considered downsampling $N^2$ random points using the SubgTransference algorithm. They studied its expected error for the integrand of finite ``smoothed-out'' variation, which typically decays with an order close to that expected for the QMC methods. While their work focuses on the algorithmic efficiency and error bounds of specific subsampling methods, our current work addresses the fundamental question of whether low-discrepancy structures exist within random sets.

The rest of this paper is organized as follows. Section~\ref{sec:preliminaries} introduces some necessary background and notations. Section~\ref{sec:result} presents the main result of this paper and its proof.

\section{Preliminaries}\label{sec:preliminaries}

Throughout this paper, let $\NN$ be the set of positive integers, and let $\NN_0:=\NN\cup \{0\}$. Let $b\ge 2$ be a fixed integer base. An \emph{elementary interval} in base $b$ is an axis-parallel rectangle of the form
\[ E_{\bsc,\bsa} = \prod_{j=1}^{d}\left[ \frac{a_j}{b^{c_j}}, \frac{a_j+1}{b^{c_j}}\right), \]
with $a_j, c_j\in \NN_0$, satisfying $0\le a_j<b^{c_j}$. The boldface letters such as $\bsc$ denote vectors, e.g., $\bsc=(c_1,\ldots,c_d)\in \NN_0^d$, where their lengths are clear from the context.
One can see that, given $\bsc\in \NN_0^d$, the elementary intervals $E_{\bsc,\bsa}$ partition the unit cube $[0,1)^d$, i.e.,
\[ \bigcup_{\substack{\bsa\in \NN_0^d\\ a_j<b^{c_j}}}E_{\bsc,\bsa} = [0,1)^d\qquad \text{and}\qquad E_{\bsc,\bsa_1}\cap E_{\bsc,\bsa_2}=\emptyset,\qquad \text{if $\bsa_1\ne \bsa_2$}.\]

Now we are ready to introduce a class of low-discrepancy point sets, called $(0,m,d)$-nets in base $b$.
\begin{definition}\label{def:tmd-net}
    Let $m\in \NN_0$. A point set $P$ of $b^m$ points in $[0,1)^d$ is called a \emph{$(0,m,d)$-net in base $b$} if every elementary interval $E_{\bsc,\bsa}$ with $c_1+\cdots+c_d=m$ contains exactly one point of $P$.
\end{definition}

\begin{remark}\label{rem:existence_net}
    Note that a $(0,m,d)$-net in base $b$ cannot exist if $m\ge 2$ and $d\ge b+2$, see \cite[Corollary~4.19]{DP10}. On the other hand, an explicit construction by Faure \cite{Fa82} ensures that a $(0,m,d)$-net in base $b$ exists for any prime-power base $b$, any $m\ge 2$, and any $d\le b+1$.
\end{remark}

It is well-known that the star discrepancy of a $(0,m,d)$-net in base $b$, defined as
\[ D^*(P)=\sup_{\bsy\in [0,1]^d}\left| \frac{|P\cap [\bszero,\bsy)|}{|P|}-\operatorname{vol}([\bszero,\bsy)])\right|, \]
where $[\bszero,\bsy):=[0,y_1)\times \cdots \times [0,y_d)$, is of $\Ocal(N^{-1}(\log N)^{d-1})$ with $N=b^m$, see \cite[Theorem~4.10]{Ni92}. 

Let us consider dividing each one-dimensional interval $[0,1)$ into $b^m$ equal sub-intervals of the form $[k b^{-m},(k+1)b^{-m})$ for $0\le k < b^m$. This partitions $[0,1)^d$ into $b^{md}$ sub-cubes, which can be naturally identified with the integer grid points $\ZZ_{b^m}^d := \{0, 1, \ldots, b^m-1\}^d$. In particular, a vector $\bsk=(k_1, \ldots, k_d) \in \ZZ_{b^m}^d$ corresponds to the sub-cube $\prod_{j=1}^d [k_j b^{-m}, (k_j+1)b^{-m})$. For a given point set $P \subset [0,1)^d$ with $|P|=b^m$, consider the corresponding set $\Pcal\subset \ZZ_{b^m}^d$ given by
\[ \Pcal = \left\{ (\lfloor b^m x_1\rfloor,\ldots, \lfloor b^m x_d\rfloor) \mid \bsx=(x_1,\ldots,x_d)\in P \right\}. \]
Here, the set $\Pcal$ is determined solely by the sub-cubes containing the points of $P$, regardless of their precise positions within those sub-cubes. Then, $P$ is a $(0,m,d)$-net in base $b$ if and only if all the points in $\Pcal$ are distinct and the configuration of $\Pcal$ satisfies the discrete equivalent of the equi-distribution condition imposed by the $(0,m,d)$-net property. This motivates the following definition of admissible patterns.

\begin{definition}
    A subset $\Pcal \subset \ZZ_{b^m}^d$ is called an \emph{admissible pattern} if $|\Pcal| = b^m$ and, for every $\bsc=(c_1,\ldots,c_d)\in \NN_0^d$ with $c_1+\cdots+c_d = m$ and every $\bsa=(a_1,\ldots,a_d)\in \NN_0^d$ with $0 \le a_j < b^{c_j}$, the set $\Pcal$ contains exactly one element $\bsk=(k_1, \ldots, k_d)$ satisfying
    \[
        a_j b^{m-c_j} \le k_j < (a_j+1) b^{m-c_j} \quad \text{for all } j=1, \ldots, d.
    \]
    We denote the cardinality of the set of all such patterns by $a_{b,d}(m)$.
\end{definition}

\section{Results}\label{sec:result}

We now state the main result of this paper. Determining the convergence behavior in the intermediate regime between the two growth conditions on $N$, given in the theorem, is left open for future work.
\begin{theorem}\label{thm:main}
    For $d\in \NN$, let $\Scal_N$ be a set of $N$ independently and uniformly distributed random points in $[0, 1)^d$. Let $b,m\ge 2$ such that $b$ is a prime-power with $d\le b+1$. Let $C_{b,d}(N,m)$ denote the event that $\Scal_N$ contains a $(0,m,d)$-net in base $b$. Considering $N:=N(m)$ as a function of $m$, the following holds:
    \begin{enumerate}
        \item If $N(m)\ge (1+\epsilon)\, b^{md}m\log b$ for some constant $\epsilon>0$, then 
        \[ \lim_{m\to \infty}\P[C_{b,d}(N(m),m)] = 1.\]
        \item If $N(m)<(1-\epsilon)\,b^{md}/(b!)^{m(d-1)/b}< (1-\epsilon)\,b^me^{(1-1/b)m(d-1)}$ for some constant $\epsilon\in (0,1)$, then
        \[ \lim_{m\to \infty}\P[C_{b,d}(N(m),m)] = 0.\]
    \end{enumerate}
\end{theorem}

To prove this result, we first give an estimate on $a_{b,d}(m)$. Although Remark~\ref{rem:existence_net} implies that $a_{b,d}(m)=0$ if $m\ge 2$ and $d\ge b+2$, here we provide an upper bound on $a_{b,d}(m)$, which is applicable to any $d,m\in \NN$ and $b\ge 2$.

\begin{lemma}\label{lem:counting}
    For any $d,m\in \NN$ and $b\ge 2$, we have
    \[ a_{b,d}(m)\le (b!)^{mb^{m-1}(d-1)}.\]
\end{lemma}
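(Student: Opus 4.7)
The plan is to encode each admissible pattern by $m(d-1)$ collections of permutations in $S_b$, one for each (coordinate)--(digit-slot) pair, and to bound the number of choices at each slot using the equi-distribution property. First, by the constraint with $\bsc=(m,0,\ldots,0)$, every admissible pattern contains exactly one sub-cube for each first-coordinate index $k_1\in\{0,\ldots,b^m-1\}$, so the pattern is determined by a map sending $k_1$ to the tuple $(k^{(2)},\ldots,k^{(d)})$ of remaining sub-cube indices. Writing base-$b$ expansions $k_1=(x_1,\ldots,x_m)$ and $k^{(j)}=(y^{(j)}_1,\ldots,y^{(j)}_m)$ (most significant digit first), the pattern is fully encoded by the $(d-1)m$ digit-valued functions $y^{(j)}_k$ of $(x_1,\ldots,x_m)$.

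For each $j\in\{2,\ldots,d\}$ and each $k\in\{1,\ldots,m\}$, I apply the equi-distribution with $\bsc$ having $c_1=m-k$, $c_j=k$, and zeros elsewhere. This says that for every prefix $(x_1,\ldots,x_{m-k})\in\{0,\ldots,b-1\}^{m-k}$, the map $(x_{m-k+1},\ldots,x_m)\mapsto(y^{(j)}_1,\ldots,y^{(j)}_k)$ is a bijection of $\{0,\ldots,b-1\}^k$. Crucially, only digit functions of the single coordinate $j$ appear here, so these particular constraints decouple across $j$ and reduce the counting to $d-1$ independent digit-by-digit problems (one per non-first coordinate).

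I then process slots in the order $k=1,2,\ldots,m$ (with inner loop over $j$). When determining $y^{(j)}_k$, the previously chosen digits $y^{(j)}_1,\ldots,y^{(j)}_{k-1}$ already satisfy the level-$(k-1)$ bijection. Hence for each parameter tuple $(x_1,\ldots,x_{m-k},y^*_1,\ldots,y^*_{k-1})\in\{0,\ldots,b-1\}^{m-1}$, the preimage $\{(x_{m-k+1},\ldots,x_m):y^{(j)}_i=y^*_i\text{ for }i<k\}$ has size exactly $b$, indexed by $x_{m-k+1}$. The level-$k$ bijection then forces the restriction of $y^{(j)}_k$ to each such preimage to be a permutation of $\{0,\ldots,b-1\}$, contributing $b!$ choices. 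Since there are $b^{m-1}$ such parameter tuples, the $(j,k)$-slot contributes at most $(b!)^{b^{m-1}}$ extensions, and multiplying over all $m(d-1)$ slots yields the claimed bound.

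The main difficulty is recognising the right encoding: one must notice that the very particular $\bsc$ above (loading all the budget on $c_1$ and $c_j$) decouples the multi-coordinate problem into $d-1$ copies of the two-dimensional case and admits a clean digit-by-digit analysis. Once the encoding is identified, the key calculational step is the preimage-size computation, which is an easy induction on $k$ from the bijection at the previous slot. Any other equi-distribution constraints (those simultaneously involving two or more non-first coordinates) impose further restrictions on the $y^{(j)}_k$'s and could in principle sharpen the count, but they are not needed for the upper bound sought here.
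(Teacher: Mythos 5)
Your proof is correct, and its combinatorial core coincides with the paper's: both reduce the count to the $d-1$ coordinate pairs $(1,j)$ and, at each of the $m$ digit levels of the $j$-th coordinate, charge one permutation in $S_b$ to each of $b^{m-1}$ ``rows'', giving a factor $(b!)^{b^{m-1}}$ per level and $(b!)^{mb^{m-1}}$ per pair. The difference lies in how the two-dimensional step is executed. The paper invokes the recursive strip decomposition of Leobacher--Pillichshammer--Schell to get $a_{b,2}(m)=(a_{b,2}(m-1))^b\,(b!)^{b^{m-1}}$ and hence the exact value $a_{b,2}(m)=(b!)^{mb^{m-1}}$, then multiplies over the projections onto the pairs $(1,j)$. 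You instead encode a pattern directly by the digit functions $y^{(j)}_k(x_1,\ldots,x_m)$ (using the constraint $\bsc=(m,0,\ldots,0)$ to index points by their first coordinate) and bound the admissible extensions slot by slot via the elementary intervals with $c_1=m-k$, $c_j=k$; your preimage-size computation is precisely where the paper's ``$b$ points per horizontal row'' appears, so the argument is the same counting unrolled rather than run as a recursion. What your version buys is self-containedness (no appeal to the cited algorithm and its bijection property) and a uniform treatment of all $d\ge 2$ through one encoding; what it gives up, as written, is exactness: the paper's bijection yields the equality $a_{b,2}(m)=(b!)^{mb^{m-1}}$, which is later used in the proof of Theorem~\ref{thm:main} as a lower bound on $A=a_{b,d}(m)$. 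Your argument could recover that equality as well (for $d=2$ the constraints you use exhaust all net constraints and every choice of row permutations is realizable), but for the lemma as stated the upper bound you prove is all that is required.
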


\begin{proof}
    The case $d=1$ is trivial since there is only one admissible pattern, which is consistent with the bound (with exponent $0$). For $d=2$, we rely on the recursive construction of $(0,m,2)$-nets proposed by Leobacher, Pillichshammer, and Schell \cite{LPS14}. It suffices to prove that their Algorithm~2 establishes a bijection between the set of $(0,m,2)$-nets and the set of tuples consisting of $b$ $(0,m-1,2)$-nets augmented with structural permutations. Specifically, any $(0,m,2)$-net can be uniquely decomposed into $b$ vertical strips 
    \[ I_k = \left[ \frac{k}{b},\frac{k+1}{b}\right)\times [0,1),\]
    for $0\le k < b$. Each strip, upon appropriate scaling, forms a $(0,m-1,2)$-net. Conversely, to reconstruct a $(0,m,2)$-net from $b$ such sub-nets, one must determine the $m$-th $b$-adic digit of the $y$-coordinates. The domain is divided into $b^{m-1}$ horizontal rows of height $b^{-(m-1)}$. In each row, there are exactly $b$ points (one from each vertical strip). To satisfy the $(0,m,2)$-net property, these $b$ points must have distinct $y$-digits at the $m$-th level, which corresponds to choosing a permutation $\sigma\in S_b$. Since there are $b^{m-1}$ rows and the choices of permutations are independent, we have
    \[ a_{b,2}(m)=(a_{b,2}(m-1))^b \cdot (b!)^{b^{m-1}}. \]
    This recursion, together with the trivial result for the base case $a_{b,2}(0)=1$, gives
    \[ a_{b,2}(m)=(b!)^{mb^{m-1}}. \]

    For general $d\ge 2$, we use the trivial fact that any two-dimensional projection of a $(0,m,d)$-net is a $(0,m,2)$-net. Given that, for any $2\le j\le d$, the two-dimensional projection onto the first and the $j$-th coordinates can have $a_{b,2}(m)$ patterns, we simply have
    \[ a_{b,d}(m)\le (a_{b,2}(m))^{d-1}=(b!)^{mb^{m-1}(d-1)}.\]
    This completes the proof.
\end{proof}

We are now ready to prove Theorem~\ref{thm:main}.
\begin{proof}[Proof of Theorem~\ref{thm:main}]
    We prove the case $d\ge 2$ first; the case $d=1$ shall be proven later.
    Throughout this proof, let $A:=a_{b,d}(m)$ denote the total number of admissible patterns. For each integer $k$ with $1\le k\le A$, let $X_k$ be an indicator random variable that equals $1$ if every sub-cube constituting the $k$-th pattern contains at least one point from $\Scal_N$, and $0$ otherwise. Note that if $X_k=1$, since the sub-cubes in a pattern are disjoint, we can select exactly one point from each non-empty sub-cube to form a subset of $\Scal_N$ that constitutes a $(0,m,d)$-net. We define the random variable $X$, representing the number of realized patterns, by
    \[ X := \sum_{k=1}^{A} X_k. \]    
    The condition $X>0$ signifies that at least one admissible pattern is contained in $\Scal_N$. This implies that the probability we seek in this theorem is $\P[C_{b,d}(N,m)]=\P[X>0]$.
    
    Since there exists at least one admissible pattern (see Remark~\ref{rem:existence_net}), i.e., $A\ge 1$, we have the following chain of inclusion of events
    \[ \{X_1=1\} \subset \{X>0\} = \bigcup_{k=1}^{A}\{X_k=1\}. \]
    Then, a union bound gives us
    \begin{equation}\label{eq:sandwich}
    p_N(b^m)=\P(X_1=1)\le \P(X > 0) \le \sum_{k=1}^{A}\P(X_k=1)=A\cdot p_N(b^m).
    \end{equation}
    Here $p_N(b^m)$ denotes the probability that, for an arbitrarily fixed set of $b^m$ distinct sub-cubes, every sub-cube contains at least one point from the $N$ random points in $\Scal_N$.
    
    Let us prove the first claim of the theorem. To do this, consider the event $X_1=1$. Let $E_j$ be the event that the $j$-th sub-cube in the specific pattern, characterized by $X_1$, is empty, for $j=1, \ldots, b^m$. 
    Obviously, the probability that a single sub-cube is empty after $N$ draws is
    \[ \P(E_j) = \left( 1 - \frac{1}{b^{md}} \right)^N \le \exp\left( - \frac{N}{b^{md}} \right). \]
    By the union bound, we have
    \begin{align*}
        1 - p_N(b^m) = \P\left( \bigcup_{j=1}^{b^m} E_j \right) \le \sum_{j=1}^{b^m} \P(E_j) \le b^m \exp\left( - \frac{N}{b^{md}} \right). 
    \end{align*}
    Taking the logarithm of the right-hand side, we get
    \[ \log \left( b^m \exp\left( - \frac{N}{b^{md}} \right) \right) = m \log b - \frac{N}{b^{md}}. \]
    
    If we choose $N$ to be an integer greater than or equal to $(1+\epsilon)\, b^{md} m \log b$ for some $\epsilon > 0$, then the exponent tends to $-\infty$ as $m \to \infty$:
    \[ m \log b - (1+\epsilon) m \log b = -\epsilon m \log b \to -\infty. \]
    Consequently, $1 - p_N(b^m) \to 0$, or equivalently $p_N(b^m) \to 1$. It follows from the lower bound in \eqref{eq:sandwich} that $\lim_{m\to \infty}\P(X > 0) = 1$, which proves the claim.

    Let us move on to the second claim of the theorem. Note that while the events of filling distinct sub-cubes are not independent, they are negatively associated \cite{JP83}. Thus, the joint probability is bounded by the product of marginal probabilities:
    \[ p_N(b^m) \le \left( 1 - \left( 1 - \frac{1}{b^{md}} \right)^N \right)^{b^m} \le \left( N \cdot \frac{1}{b^{md}} \right)^{b^m}, \]
    where we used the inequality $1-(1-x)^N\le Nx$ for $x\in [0,1]$.
    Together with Lemma~\ref{lem:counting}, we have
    \[ A\cdot p_N(b^m)\le (b!)^{mb^{m-1}(d-1)}\left( N \cdot \frac{1}{b^{md}} \right)^{b^m}. \]

    Now let us choose $N$ to be an integer less than $(1-\epsilon)\,b^{md}/(b!)^{m(d-1)/b}$ for some constant $\epsilon\in (0,1)$. Then, since $b!> e(b/e)^b$ for any integer $b\ge 2$, we see that $N$ is further bounded above by $(1-\epsilon)\,b^me^{(1-1/b)m(d-1)}$. It holds that
    \begin{align*}
        A\cdot p_N(b^m)< (b!)^{mb^{m-1}(d-1)}\left( (1-\epsilon)\frac{b^{md}}{(b!)^{m(d-1)/b}} \cdot \frac{1}{b^{md}} \right)^{b^m}=(1-\epsilon)^{b^m}.
    \end{align*}
    Since $\epsilon\in (0,1)$ and $b\ge 2$, we have $(1-\epsilon)^{b^m}\to 0$ as $m\to \infty$. It follows from the upper bound in \eqref{eq:sandwich} that $\lim_{m\to \infty}\P(X > 0) = 0$, which proves the claim.

    Finally, we prove the case $d=1$. In this case, we always have $A=1$ independently of $b$ and $m$. This leads to the equality $\P[C_{b,1}(N,m)]=p_N(b^m)$. Thus, the lower and upper estimates for $p_N(b^m)$ provided above directly apply.
\end{proof}

\begin{remark}
    The following observation has been pointed out to the authors by Michael Gnewuch (private communication, 2025).
    Suppose we fix a specific admissible pattern consisting of $b^m$ sub-cubes. The number of random samples $N$ required to fill all these sub-cubes follows a distribution similar to the coupon collector's problem, where we aim to collect $b^m$ specific coupons out of $b^{md}$ possibilities. The probability of hitting a new sub-cube in the target pattern, given that $i-1$ have already been filled, is $q_i = (b^m - (i-1))/b^{md}$. The expected number of samples is thus
    \[ \E[N] = \sum_{i=1}^{b^m} \frac{1}{q_i} = b^{md} \sum_{j=1}^{b^m} \frac{1}{j} = b^{md} H_{b^m} \approx b^{md} m \log b, \]
    where $H_{b^m}$ denotes the $b^m$-th harmonic number. This implies that our condition to ensure $\lim_{m\to \infty}\P[C_{b,d}(N(m),m)] = 1$ is essentially the expected number of random points necessary to find a specific admissible pattern, up to a constant factor $1+\epsilon$. 
\end{remark}

\section*{Acknowledgments}
The authors would like to thank Michael Gnewuch for his valuable suggestions. We are also grateful to the anonymous reviewers for their careful reading and constructive comments, which helped improve the manuscript.

\section*{Funding}
The work of the second author (T.G.) is supported by JSPS KAKENHI Grant Number 23K03210.

\bibliographystyle{amsplain}
\bibliography{references}

@book {No88,
    AUTHOR = {Novak, Erich},
     TITLE = {{Deterministic and Stochastic Error Bounds in Numerical Analysis}},
    SERIES = {Lecture Notes in Mathematics},
    VOLUME = {1349},
 PUBLISHER = {Springer-Verlag, Berlin},
      YEAR = {1988},
     PAGES = {vi+113},
       DOI = {10.1007/BFb0079792},
}

@article {DKS13,
    AUTHOR = {Dick, Josef and Kuo, Frances Y. and Sloan, Ian H.},
     TITLE = {High-dimensional integration: the quasi-{M}onte {C}arlo way},
   JOURNAL = {Acta Numer.},
  FJOURNAL = {Acta Numerica},
    VOLUME = {22},
      YEAR = {2013},
     PAGES = {133--288},
       DOI = {10.1017/S0962492913000044},
}

@book {KN74,
    AUTHOR = {Kuipers, L. and Niederreiter, H.},
     TITLE = {{Uniform Distribution of Sequences}},
    SERIES = {Pure and Applied Mathematics},
 PUBLISHER = {Wiley-Interscience [John Wiley \& Sons], New
              York-London-Sydney},
      YEAR = {1974},
     PAGES = {xiv+390},
}

@book {Ni92,
    AUTHOR = {Niederreiter, Harald},
     TITLE = {{Random Number Generation and Quasi-{M}onte {C}arlo Methods}},
    SERIES = {CBMS-NSF Regional Conference Series in Applied Mathematics},
    VOLUME = {63},
 PUBLISHER = {Society for Industrial and Applied Mathematics (SIAM),
              Philadelphia, PA},
      YEAR = {1992},
     PAGES = {vi+241},
       DOI = {10.1137/1.9781611970081},
}

@article {CP76,
    AUTHOR = {Cranley, R. and Patterson, T. N. L.},
     TITLE = {Randomization of number theoretic methods for multiple
              integration},
   JOURNAL = {SIAM J. Numer. Anal.},
  FJOURNAL = {SIAM Journal on Numerical Analysis},
    VOLUME = {13},
      YEAR = {1976},
    NUMBER = {6},
     PAGES = {904--914},
       DOI = {10.1137/0713071},
}

@incollection {Ow95,
    AUTHOR = {Owen, Art B.},
     TITLE = {Randomly permuted {$(t,m,s)$}-nets and {$(t,s)$}-sequences},
 BOOKTITLE = {Monte {C}arlo and Quasi-{M}onte {C}arlo Methods in Scientific Computing ({L}as {V}egas, {NV}, 1994)},
    SERIES = {Lect. Notes Stat.},
    VOLUME = {106},
     PAGES = {299--317},
 PUBLISHER = {Springer, New York},
      YEAR = {1995},
       DOI = {10.1007/978-1-4612-2552-2\_19},
}

@incollection {LL02,
    AUTHOR = {L'Ecuyer, Pierre and Lemieux, Christiane},
     TITLE = {Recent advances in randomized quasi-{M}onte {C}arlo methods},
 BOOKTITLE = {Modeling uncertainty},
    SERIES = {Internat. Ser. Oper. Res. Management Sci.},
    VOLUME = {46},
     PAGES = {419--474},
 PUBLISHER = {Kluwer Acad. Publ., Boston, MA},
      YEAR = {2002},
       DOI = {10.1007/0-306-48102-2\_20},
}

@book {DP10,
    AUTHOR = {Dick, Josef and Pillichshammer, Friedrich},
     TITLE = {{Digital Nets and Sequences---Discrepancy Theory and Quasi-Monte Carlo Integration}},
 PUBLISHER = {Cambridge University Press, Cambridge},
      YEAR = {2010},
     PAGES = {xviii+600},
       DOI = {10.1017/CBO9780511761188},
}

@inproceedings {BJ25,
    AUTHOR = {Bansal, Nikhil and Jiang, Haotian},
     TITLE = {Quasi-{M}onte {C}arlo beyond {H}ardy-{K}rause},
 BOOKTITLE = {Proceedings of the 2025 {A}nnual {ACM}-{SIAM} {S}ymposium on
              {D}iscrete {A}lgorithms ({SODA})},
     PAGES = {2051--2075},
 PUBLISHER = {SIAM, Philadelphia, PA},
      YEAR = {2025},
       DOI = {10.1137/1.9781611978322.65},
}

@article {Fa82,
    AUTHOR = {Faure, H.},
     TITLE = {Discr\'epance de suites associ\'ees \`a{} un syst\`eme de
              num\'eration (en dimension {$s$})},
   JOURNAL = {Acta Arith.},
  FJOURNAL = {Polska Akademia Nauk. Instytut Matematyczny. Acta Arithmetica},
    VOLUME = {41},
      YEAR = {1982},
    NUMBER = {4},
     PAGES = {337--351},
       DOI = {10.4064/aa-41-4-337-351},
}

@article {LPS14,
    AUTHOR = {Leobacher, Gunther and Pillichshammer, Friedrich and Schell,
              Thomas},
     TITLE = {Construction algorithms for plane nets in base {$b$}},
   JOURNAL = {Math. Pannon.},
  FJOURNAL = {Mathematica Pannonica},
    VOLUME = {25},
      YEAR = {2014/15},
    NUMBER = {1},
     PAGES = {157--168},
}

@article {JP83,
    AUTHOR = {Joag-Dev, Kumar and Proschan, Frank},
     TITLE = {Negative association of random variables, with applications},
   JOURNAL = {Ann. Statist.},
  FJOURNAL = {The Annals of Statistics},
    VOLUME = {11},
      YEAR = {1983},
    NUMBER = {1},
     PAGES = {286--295},
       DOI = {10.1214/aos/1176346079},
}

\end{document}